\algrenewcommand\algorithmicrequire{\textbf{Input:}}
\algrenewcommand\algorithmicensure{\textbf{Output:}}
\algrenewcommand\algorithmicforall{\textbf{For}}
\newtheorem{theorem}{Theorem}
\newtheorem{lemma}{Lemma}
\newtheorem{definition}{Definition} 
\newtheorem{proof}{Proof}
\newcommand{\R}{\mathbb{R}}
\newcommand*{\QEDA}{\hfill\ensuremath{\blacksquare}}
\title{\LARGE
	Actuator Placement for Optimizing Network Performance under Controllability Constraints
}
\author{Baiwei Guo\thanks{B. Guo, O. Karaca, M. Kamgarpour are with the Automatic Control Laboratory, Department of Information Technology and Electrical Engineering, ETH Z\"{u}rich, Switzerland. e-mails: {\tt \{bguo, okaraca, mkamgar\}@ethz.ch}.} \and Orcun Karaca\footnotemark[1] \and Tyler H. Summers\thanks{ T. Summers is with the Department of Mechanical Engineering, University of Texas at Dallas, Richardson, TX, USA. email:  {\tt tyler.summers@utdallas.edu}.} \and Maryam Kamgarpour\footnotemark[1]
}
\begin{document}
\maketitle \let\thefootnote\relax\footnotetext{The work of M. Kamgarpour and O. Karaca was gratefully funded by the European Union ERC Starting Grant CONENE.}\\
\let\thefootnote\relax\footnotetext{The work of T. Summers was sponsored by the Army Research Office and was accomplished under Grant Number: W911NF-17-1-0058.}
\begin{abstract}\noindent With the rising importance of large-scale network control, the problem of actuator placement has received increasing attention. Our goal in this paper is to find a set of actuators minimizing the metric that measures the average energy consumption of the control inputs while ensuring structural controllability of the network. As this problem is intractable, greedy algorithm can be used to obtain an approximate solution. To provide a performance guarantee for this approach, we first define the submodularity ratio for the metric under consideration and then reformulate the structural controllability constraint as a matroid constraint. This shows that the problem under study can be characterized by a matroid optimization involving a weakly submodular objective function. Then, we derive a novel performance guarantee for the greedy algorithm applied to this class of optimization problems. Finally, we show that the matroid feasibility check for the greedy algorithm can be cast as a maximum matching problem in a certain auxiliary bipartite graph related to the network graph.
\end{abstract}
\section{Introduction}
Actuator placement is the problem of finding a subset from a finite set of possible placements for actuators
to optimize a desired network performance metric. With the increased importance of large-scale network control, such as those arising in power grids and transportation systems, there has been a surge of interest to study the problem of actuator placement. Past works have discussed several controllability-based performance metrics and derived properties of the resulting optimization problems~\cite{MULLER1972237,Redmond1996}.

The problem of actuator placement is in general NP-hard~\cite{Ali2017MinimalReachablity}. Hence, earlier studies have adopted the greedy algorithm to derive an approximate solution\cite{SubmodularityandControl}. Under a submodular metric and a cardinality constraint on the number of actuators allowed, the greedy algorithm is shown to enjoy a provable suboptimality guarantee~\cite{nemhauser1978analysis}.
However, some metrics, such as the minimum eigenvalue of the controllability Gramian and the trace of the inverse of the Gramian, do not exhibit submodularity~\cite{summers2017performance,tyler2018correction}. To alleviate this issue, the notion of submodularity has been extended to weak submodularity using submodularity ratio quantifying how close a function is to being submodular~\cite{Das:2011:SMS:3104482.3104615,bian2017guarantees}. Given this ratio, it is possible to derive a performance guarantee for the greedy algorithm applied to a larger class of performance metrics~\cite{summers2017performance}.

Nonetheless, the guarantees above are restricted to optimization problems subject to simple cardinality constraints. Given a cardinality constraint, the resulting actuator set might not render the system controllable. 
To address this issue, we need to include controllability constraints in the optimization problem. However, to the best of our knowledge, there is no approach to ensure feasibility of the iterates of the greedy algorithm applied to this problem. 
On the other hand, structural controllability constraints have been well-studied since this controllability concept is based only on the graphical interconnection structure of the dynamical system under consideration~\cite{SturcturalcontrollabilityTai,liu2011controllability}. Structurally controllable systems are those controllable after a slight perturbation of the system parameters corresponding to the fixed set of edges in the underlying network graph. The authors in~\cite{clark2012leader} have studied a leader selection problem to obtain a structurally controllable system while minimizing a submodular objective function. The structural controllability constraints arising in the leader selection problem are proven to be equivalent to so-called matroid constraints~\cite{clark2012leader}. However, the leader selection problem is different from the actuator placement problem since the former selects a set of states to act as external control inputs controlling the dynamics of the remaining states.
To this end, our first goal is to show that the actuator placement problems under structural controllability constraints can also be cast as a matroid optimization.

To obtain a performance guarantee for the greedy algorithm applied to a matroid optimization problem, the authors in~\cite{fisher1978analysis} have studied the case with a submodular objective function. As an extension, the authors in \cite{pmlr-v80-chen18b} have considered weakly submodular objective functions. This setting captures the actuator placement problems under structural controllability constraints. However, the performance guarantees in~\cite{pmlr-v80-chen18b} are restricted to the residual random greedy algorithm. To the best of our knowledge, there is no guarantee obtained for the greedy algorithm applied to a matroid optimization if the objective is weakly submodular. Therefore, our second goal is to obtain a performance guarantee for the greedy algorithm applied to this problem.

Our contributions are as follows. 
First, we show that the actuator placement problem optimizing a nonsubmodular controllability metric under structural controllability constraints can be cast as a matroid optimization, see Theorem~\ref{thm: matroid}. Second, we introduce a novel notion of submodularity ratio and show that the metric under consideration satisfies this property. Third, using the introduced notion of submodularity ratio, we bound the worst-case performance of the greedy algorithm applied to the class of optimization problems with weakly submodular objective functions and matroid constraints, see Theorem \ref{thm:upperlimitFormatroidconstraints}. This enables us to bound the greedy algorithm's performance on the actuator placement problem under structural controllability constraints. Finally, we show that the matroid feasibility check for the greedy algorithm is equivalent to a maximum matching problem in a bipartite graph related to the network graph, see Theorem~\ref{thm: feasibility}.

The remainder of this paper is organized as follows. In Section \ref{sec:mech}, we introduce the system model and the problem formulation. In Section~\ref{sec:problemstructure}, we study the properties of the controllability metric and reformulate the structural controllability constraints such that the feasible region characterizes a matroid. Section~\ref{sec: performance guarantee} obtains a performance guarantee for the greedy algorithm applied to our problem class. In Section~\ref{sec:num}, we discuss the implementation of the greedy algorithm for our problem and include numerical case studies. 

\section{Problem Formulation}\label{sec:mech}

In this section, we introduce the system model, the concept of structural controllability and a metric measuring the average energy required to control the system. Our main task is formulated as minimization of this energy-based metric through an actuator placement that renders the system structurally controllable.

\subsection{System Model}
Consider a linear control system with state vector $x\in \R^n$. To each state variable $x_i\in \R$, we associate a node $v_i$. A control input $u_i\in \R$ can be exerted at each node $v_i\in V:= \{v_1, \ldots, v_n\}$. Given a set $S\subset V$ chosen as the actuator set, dynamics can be written as
\begin{equation}
\begin{aligned}
\label{eq: systemmodel}
\dot{x} = Ax + B(S)u.
\end{aligned}
\end{equation}
We let $B(S) := \text{diag}(\bm {1}(S))$, where $\bm {1}(S)$ denotes a vector of size $n$ whose $i$th element is $1$ if $v_i$ belongs to $S$ and $0$ otherwise.  We let $G=(V,E)$ denote a digraph with nodes $V$ and edges $E$, where the edge $(v_i,v_j)\in E$ if and only if $(A)_{ij}$ is not $0$. 
\subsection{Problem Statement}
The pair $(A,B(S))$ is called controllable if the states $x$ can be steered arbitrarily in $\R^n$ in any given finite time. It is known that controllability can be verified by the rank of the controllability matrix $P=\begin{bmatrix}
   B(S) & AB(S) & \ldots & A^{n-1}B(S)
\end{bmatrix}\in \R^{n\times n^2}$. If $(A,B(S))$ is not controllable, it might be possible to slightly perturb the entries in $A$ and $B(S)$ to ensure controllability \cite{SturcturalcontrollabilityTai}. As the entries of the matrix $A$ are generally known with some errors, the notion of controllability is not robust. A more appropriate notion is structural controllability defined below.

\begin{definition}
\label{def: structuralcontrollability}
$(A,B)$ and $(\hat{A},\hat{B})$ with $A,B,\hat{A},\hat{B}$ $\in \R^{n\times n}$ are said to have the same structure if matrices $[A\text{ }B]$ and $[\hat{A} \text{ }\hat{B}]$ have fixed zeros at the same entries. Given $S\subset V$, $(A,B(S))$ is structurally controllable if there exists a controllable pair $(\hat{A},\hat{B})$ having the same structure as $(A,B(S))$. We say $S$ is a \textit{capable actuator set} if $(A,B(S))$ is structurally controllable.
\end{definition}

Even if a system is controllable, an unacceptably large amount of energy might be needed to reach a desired state. Hence, it is crucial to minimize this energy consumption. The minimum energy required to steer the system from $x_0\in\R^n$ at $t=0$ to zero at $t=T$ is given by $x_0^\top W^{-1}_T(S)x_0,$ where $W_T(S)=\int_{0}^{T} e^{A\tau}B(S) B^\top (S) e^{A^\top\tau} d\tau$ is the controllability Gramian. To obtain an expression independent of the initial state $x_0$, we can calculate the average energy required over the unit sphere, $||x_0||_2=1$, as $\text{tr}(W_T^{-1}(S)).$
This expression is well-defined only when the set $S$ renders the system controllable. Inspired by \cite{EffortBounds}, we introduce a small positive number $\epsilon\in \R_+$ and propose the following metric
\begin{equation}
\label{eq:F(S)}
F(S) =  \text{tr}((W_T(S)+\epsilon I)^{-1}), \text{ }\forall S\subset V.
\end{equation}

To make a system easier to control, we seek a set $S\subset V$ minimizing the metric above. Since in a large-scale network, the number of actuators allowed is in general limited, we consider a cardinality bound of $K\in \mathbb{Z}_+$ on the number of actuators allowed. Additionally, we need a controllability constraint to ensure that the actuator set is capable. Therefore, our main problem is formulated as
\begin{equation}
\begin{aligned}
& \min_{S}
& & F(S)
\\ 
&\ \mathrm{s.t.} & & |S|\leq K \text{ and $S$ is a capable actuator set.}
\end{aligned}
\label{eq:mainproblem2}
\end{equation}
Notice that $K$ needs to be large enough to ensure feasibility of the problem above. In Section \ref{sec:feasibilitycheck}, this issue is addressed by deriving a lower bound on $K$. Though we do not provide any analysis on the complexity of Problem (\ref{eq:mainproblem2}), to the best of our knowledge, no computationally feasible method of finding the exact optimum has ever been proposed. A heuristic method, called the greedy algorithm, has been broadly adopted to derive an approximate solution. This algorithm starts from the empty set and iteratively adds the element with the largest marginal gain. 
In the next two sections, our goal is to derive a performance guarantee for the greedy algorithm applied to Problem~\eqref{eq:mainproblem2}.

\section{Characterization of Problem Structure}
\label{sec:problemstructure}
If the set function $-F$ is submodular and the constraints form a matroid, it is known that one can derive a performance guarantee for the greedy algorithm \cite{fisher1978analysis}. However, the work in \cite{summers2017performance} shows that the set function $-F$ is not submodular. Moreover, there is no work characterizing the constraints found in Problem~\eqref{eq:mainproblem2}. 
In the following, we show that \textit{a)} one can analyze the submodularity ratio of the nonsubmodular function $-F$ and \textit{b)} the constraints in Problem~\eqref{eq:mainproblem2} form a matroid via a reformulation.

\subsection{Properties of the Objective}
We say that a set function $f:2^V\rightarrow \R$ is (strictly) increasing if $f(S_1)\leq$($<$)$ f(S_2)$ for any $S_1\subsetneqq S_2\subset V$. Similarly, we say that $f$ is (strictly) decreasing if $-f$ is (strictly) increasing. Intuitively, with more input nodes, the System~(\ref{eq: systemmodel}) would be easier to control, and thus the metric $F$ in~(\ref{eq:F(S)}) would be smaller. This intuition can be readily verified as follows.
\begin{lemma}
The metric $F$ in~\eqref{eq:F(S)} is strictly decreasing.
\end{lemma}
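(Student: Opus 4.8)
The plan is to show that adding any single node strictly decreases $F$, which suffices since any strict superset is reached by adding nodes one at a time. So fix $S_1 \subsetneqq S_2$; it is enough to prove $F(S_1) > F(S_2)$ when $S_2 = S_1 \cup \{v\}$ for some $v \notin S_1$, and then chain the inequalities. The key object is the controllability Gramian $W_T(S)$, and the central structural fact I would establish first is monotonicity of the Gramian in the matrix (Loewner) order: if $S_1 \subset S_2$, then $W_T(S_1) \preceq W_T(S_2)$, with \emph{strict} inequality in the relevant direction when a node is genuinely added.

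First I would write $B(S)B^\top(S) = \sum_{v_i \in S} e_i e_i^\top$, since $B(S) = \mathrm{diag}(\bm{1}(S))$ is a diagonal $0/1$ matrix. Substituting into the Gramian definition gives $W_T(S) = \int_0^T e^{A\tau}\big(\sum_{v_i\in S} e_i e_i^\top\big) e^{A^\top \tau}\, d\tau = \sum_{v_i \in S} \int_0^T e^{A\tau} e_i e_i^\top e^{A^\top \tau}\, d\tau$. Writing $M_i := \int_0^T e^{A\tau} e_i e_i^\top e^{A^\top \tau}\, d\tau$, each $M_i \succeq 0$ because it is an integral of rank-one positive semidefinite terms $(e^{A\tau} e_i)(e^{A\tau} e_i)^\top$. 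Hence $W_T(S_1 \cup \{v\}) = W_T(S_1) + M_v$ with $M_v \succeq 0$, and moreover $M_v \neq 0$ (the integrand is nonzero at $\tau = 0$, where it equals $e_i e_i^\top$, and is continuous, so the integral is strictly positive semidefinite in at least one direction). This gives the strict Loewner ordering $W_T(S_1) + \epsilon I \prec W_T(S_1 \cup \{v\}) + \epsilon I$.

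Next I would push this through the map $X \mapsto \mathrm{tr}(X^{-1})$. Adding $\epsilon I$ guarantees both matrices are positive definite, so the inverse and trace are well defined regardless of controllability. The map $X \mapsto X^{-1}$ is operator-monotone decreasing on the positive-definite cone: if $0 \prec X \prec Y$ then $Y^{-1} \prec X^{-1}$. Taking traces preserves the strict inequality, since $\mathrm{tr}$ is strictly monotone on the PSD cone (the difference $X^{-1} - Y^{-1}$ is positive semidefinite and nonzero, hence has strictly positive trace). Therefore $F(S_1 \cup \{v\}) = \mathrm{tr}\big((W_T(S_1\cup\{v\})+\epsilon I)^{-1}\big) < \mathrm{tr}\big((W_T(S_1)+\epsilon I)^{-1}\big) = F(S_1)$.

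The main obstacle is establishing \emph{strictness} cleanly rather than merely the weak inequalities. Weak monotonicity of the Gramian and weak operator-monotonicity of $X\mapsto X^{-1}$ are standard; the care is in arguing that the added term $M_v$ is genuinely nonzero so that the Loewner inequality is strict, and that strictness survives under $X\mapsto \mathrm{tr}(X^{-1})$. The argument that $M_v \neq 0$ via continuity of the integrand at $\tau=0$ handles the first point; the fact that $X \prec Y \Rightarrow Y^{-1} - X^{-1} \succ 0$ is not quite immediate (operator-monotonicity of inversion typically gives only $\preceq$ from $\preceq$), so I would instead use $\mathrm{tr}(X^{-1}) - \mathrm{tr}(Y^{-1}) = \mathrm{tr}(X^{-1}(Y-X)Y^{-1})$ and note this equals $\mathrm{tr}\big(Y^{-1/2}X^{-1}Y^{-1/2}(Y-X)\big)$-type expression whose factors are positive definite and whose difference $Y-X = M_v$ is nonzero PSD, forcing the trace to be strictly positive. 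Everything else is routine.
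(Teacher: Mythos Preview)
Your proof is correct. It takes a different route from the paper: where you argue via the Loewner order --- writing $W_T(S\cup\{v\}) = W_T(S) + M_v$ with $M_v \succeq 0$ nonzero, then invoking operator-monotonicity of $X \mapsto X^{-1}$ on the positive-definite cone together with strict monotonicity of the trace --- the paper instead interpolates, setting $H(z) = (W_T(S) + z\,W_T(\{\omega\}) + \epsilon I)^{-1}$ and computing
\[
\frac{d}{dz}\,\text{tr}(H(z)) = -\text{tr}\big(H(z)\,W_T(\{\omega\})\,H(z)\big) < 0,
\]
so that $F(S\cup\{\omega\}) = \text{tr}(H(1)) < \text{tr}(H(0)) = F(S)$. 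The derivative argument packages strictness in a single line (the matrix $H\,W_T(\{\omega\})\,H$ is nonzero PSD because $H$ is invertible and $W_T(\{\omega\}) \neq 0$), whereas your approach avoids calculus but requires the extra care you flag in your last paragraph. One small remark: you momentarily write the strict relation $\prec$ where only $\preceq$ together with $\neq$ is available (since $M_v$ need not be positive definite), but you correctly diagnose and repair this; the cleanest fix is simply that $X \preceq Y$ with $X,Y \succ 0$ gives $Y^{-1} \preceq X^{-1}$, and $X \neq Y$ forces $X^{-1} \neq Y^{-1}$, so $\text{tr}(X^{-1} - Y^{-1}) > 0$.
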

\begin{proof}
For any $S \subset V$ and any $\omega\in V\setminus S$, let $H(z)=(W_T(S)+zW_T(\{\omega\})+\epsilon I)^{-1}$. Notice that
$$\text{tr}(H(1)) = \text{tr}((W_T(S\cup \{\omega\})+\epsilon I)^{-1}) = F(S\cup \{\omega\}),$$
since $W_T(S)+W_T(\{\omega\})= W_T(S\cup \{\omega\}).$
Then, we obtain
$$\frac{d(\text{tr}(H(z)))}{dz}=-\text{tr}(H(z)W_T(\{\omega\})H(z))< 0, \text{ }\forall z\in(0,1).$$
The inequality above holds because $H(z)$ is symmetric and $W_T(\{\omega\})$ is positive semidefinite.
Thus, $\text{tr}(H(1))-\text{tr}(H(0))< 0$.\hfill\QEDA
\end{proof}

Next, we introduce the notion of submodularity ratio. For the following, $\rho_{U}(S):=f(S\cup U)-f(S)$, for all $S,U\subset V$.
\begin{definition}
For a nondecreasing function $f \colon 2^V \to \R$, submodularity ratio is the largest $\gamma \in \R_+$ such that
\begin{equation}
\begin{aligned}
\label{eq:definition for submodularity ratio}
\gamma\rho_\omega(S\cup U)\leq  \rho_\omega(S),\text{ } \forall S,U,\{\omega\} \subset V. 
\end{aligned}
\end{equation}
A set function $f$ with submodularity ratio $\gamma$ is called $\gamma$-submodular. A $\gamma$-submodular set function is said to be submodular if $\gamma=1$ and weakly submodular if $0<\gamma<1$. 
\label{def:submodularityratio}
\end{definition}

Clearly, for any nondecreasing set function, $\gamma\in[0,1]$. Since the metric $F$ is decreasing, we instead consider the submodularity ratio $\gamma$ of $-F$. Due to the strict monotonicity of the metric $F$, we have $\gamma > 0$. Thus, $-F$ is weakly submodular. {In Appendix, we connect Definition~\ref{def:submodularityratio} with another existing notion of submodularity ratio, and discuss the need to introduce this notion as per Definition~\ref{def:submodularityratio}. }

\subsection{Reformulation of the Constraint Set}
Since $-F$ is strictly decreasing, the optimal solution to Problem (\ref{eq:mainproblem2}), denoted as $S^*$, satisfies $|S^*|=K$. As a result, we can define $\mathcal{C}_K=\{S\subset V\text{ }|\text{ }|S| = K\text{ and}$ $ S\text{ is}$ $\text{a capable actuator set}\}$ and rewrite Problem (\ref{eq:mainproblem2}) as the minimization of $F$ over the set $\mathcal{C}_K$. For the greedy algorithm, at any iteration $i\leq K$, let $S^i$ denote the actuator set obtained at the $i$th iteration. This selection $S^i$ has to be a subset of some set in $\mathcal{C}_K$. Otherwise, the greedy solution $S^K$ may not be feasible in $\mathcal{C}_K$. Keeping this in mind, we define $\mathcal{\tilde{C}}_K=\{\Omega\text{ }|\text{ }\exists \text{ } S \in \mathcal{C}_K \text{ such that } \Omega \subset S \}$ and reformulate (\ref{eq:mainproblem2}) as   
\begin{equation}
\begin{aligned}
& \min_S
& & F(S)
\\ 
&\ \mathrm{s.t.} & & S\in \mathcal{\tilde{C}}_K.
\end{aligned}
\label{eq:mainproblem}
\end{equation}
The strict monotonicity of $F$ again ensures that the optimal solution to (\ref{eq:mainproblem}) coincides with the one to (\ref{eq:mainproblem2}). As such, for the rest of the paper, we consider solving Problem (\ref{eq:mainproblem}) as an equivalent characterization of Problem (\ref{eq:mainproblem2}).

Next, we show that the feasible region of Problem (\ref{eq:mainproblem}) has a special structure that helps us bound the worst-case performance of the greedy algorithm. To this end, we first bring in the definition of a matroid. We then show that the constraint set $\tilde{\mathcal{C}}_K$ characterizes a matroid.
\begin{definition}
A matroid $\mathcal{M}$ is an ordered pair $(V,\mathcal{F})$ consisting of a ground set $V$ and a collection $\mathcal{F}$ of subsets of $V$ which satisfies (i) $\emptyset \in \mathcal{F}$, (ii) if $S \in \mathcal{F}$ and $S'\subset S$, then $S'\in \mathcal{F}$, (iii) if $S_1$,$S_2\in \mathcal{F}$ and $|S_1|<|S_2|$, there exists $\omega \in S_2\setminus S_1$ such that $\{\omega\}\cup S_1\in \mathcal{F}$. Every set in $\mathcal{F}$ is called \textit{independent}.
\end{definition}
\begin{theorem}
\label{thm: matroid}
$\mathcal{M}=(V,\tilde{\mathcal{C}}_K)$ is a matroid. 
\end{theorem}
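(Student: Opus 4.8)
The plan is to verify the three matroid axioms for $\tilde{\mathcal{C}}_K$ directly, isolating the augmentation axiom as the real content. Axiom (i), that $\emptyset \in \tilde{\mathcal{C}}_K$, holds as soon as $\mathcal{C}_K \neq \emptyset$. To establish the latter I would first record the elementary monotonicity of capability: if $S$ is capable and $S \subseteq S'$, then $S'$ is capable, because a controllable realization of $(A,B(S))$ extends to one of $(A,B(S'))$ by assigning arbitrary nonzero values to the extra diagonal entries, and adding input columns cannot destroy full rank of the controllability matrix. Since $B(V)=I$ makes $(A,B(V))$ controllable, $V$ is capable; taking a minimal capable set (of size at most $K$ by the feasibility assumption on $K$) and padding it up to cardinality $K\le n$ produces an element of $\mathcal{C}_K$. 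Axiom (ii) is immediate from the definition of $\tilde{\mathcal{C}}_K$ as the downward closure of $\mathcal{C}_K$: if $S\subseteq T\in\mathcal{C}_K$ and $S'\subseteq S$, then $S'\subseteq T$, so $S'\in\tilde{\mathcal{C}}_K$.

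The substance is axiom (iii): given $S_1,S_2\in\tilde{\mathcal{C}}_K$ with $|S_1|<|S_2|$, I must produce $\omega\in S_2\setminus S_1$ with $S_1\cup\{\omega\}\in\tilde{\mathcal{C}}_K$, i.e.\ still extendable to a capable set of size $K$. My approach is to work from a graph-theoretic characterization of structural controllability (Lin's theorem): $S$ is capable iff \emph{(a)} every node of $G$ is reachable from $S$ (accessibility, equivalently $S$ meets every source strongly connected component) and \emph{(b)} the bipartite representation of $(A,B(S))$ admits a matching that saturates all state nodes (the no-dilation condition). Both conditions are monotone, so by the monotonicity above $\Omega\in\tilde{\mathcal{C}}_K$ iff $\Omega$ has a capable superset of size at most $K$. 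I would then lift $S_1,S_2$ to capable size-$K$ supersets and obtain $\omega$ by an exchange/augmenting-path argument that simultaneously repairs accessibility (by routing inside the relevant source components) and no-dilation (by augmenting the saturating matching), all within the cardinality budget $K$.

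The hard part is exactly axiom (iii), and the difficulty is structural rather than computational. Capability is the \emph{conjunction} of two requirements, each individually matroidal — accessibility is the spanning condition of a partition matroid on the source components, and no-dilation is the spanning condition of the dual of a transversal matroid derived from the bipartite graph — but a conjunction of two matroid spanning conditions is not a matroid in general. One can even exhibit networks whose minimal capable sets are not equicardinal, so $\tilde{\mathcal{C}}_K$ cannot be read off as a truncation of a single ``capability matroid.'' What rescues the statement is a compatibility between the two conditions that the network graph enforces: a source component's nodes have their in-neighborhoods confined to that component, which sharply restricts how the matching constraints can interleave with the accessibility constraints. The core of the proof must show that the augmenting construction can always be completed using an element of $S_2\setminus S_1$ by exploiting this compatibility, and it is this matching-based exchange argument — not axioms (i)--(ii) — that I expect to be the obstacle; it is also what links the result to the maximum-matching feasibility check developed later in the paper.
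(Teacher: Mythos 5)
You have correctly located the entire content of the theorem in the augmentation axiom (iii), and your axioms (i) and (ii) are fine (modulo the fact that your monotonicity argument for capability is cleanest via genericity of controllability: a controllable realization of the $(A,B(S))$ structure is also a realization of the $(A,B(S'))$ structure with the extra free entries set to zero, or perturbed to nonzero values without losing controllability, which is an open condition). But axiom (iii) is exactly where the proposal stops being a proof. You describe what the exchange argument \emph{must} accomplish --- lift $S_1,S_2$ to capable size-$K$ supersets and find $\omega\in S_2\setminus S_1$ by an augmenting construction that ``simultaneously repairs accessibility and no-dilation within the cardinality budget $K$'' --- but you never give that construction, and you yourself flag it as the obstacle. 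The ``compatibility'' between the partition-type condition (accessibility) and the matching-type condition (no dilation), which is what you claim rescues the statement, is asserted rather than established; as you correctly observe, a conjunction of two matroid spanning conditions is not matroidal in general, so precisely this step cannot be waved through. As it stands, the core of the theorem is missing.

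For comparison, the paper never verifies axiom (iii) from scratch either; it sidesteps the combinatorial difficulty by a reduction. Partitioning the state as $x_S, x_N$ with $N=V\setminus S$, it shows via elementary rank manipulations of the controllability matrix (an invertible column transformation plus Cayley--Hamilton) that $(A,B(S))$ is structurally controllable if and only if $(A_{NN},A_{NS})$ is structurally controllable, i.e., capability of $S$ as an actuator set is equivalent to $S$ being a leader selection achieving structural controllability. Hence $\mathcal{C}_K=\mathcal{L}_K$, and the matroid property of $(V,\tilde{\mathcal{L}}_K)$ is imported from \cite{clark2012leader}. So the two routes differ genuinely: yours attacks the exchange property directly through Lin's graph-theoretic characterization, while the paper's is a reduction-plus-citation. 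To complete your route you would essentially have to reproduce the matroid construction underlying \cite{clark2012leader} (which is where the interleaving of the accessibility and matching constraints is actually resolved); alternatively, prove the actuator-to-leader reduction and invoke that result, which is what the paper does.
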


\begin{proof}
To prove this theorem, we show that given an actuator set $S$, structural controllability of $(A,B(S))$ can be equivalently formulated as structural controllability of the system with the set $S$ as a leader selection. The concept of leader selection was studied in~\cite{patterson2010leader}. We define $N= V\setminus S$ and partition accordingly the state vector $x$ into $x_S$ and $x_N$. The dynamics of the system can equivalently be written as 
\begin{equation}
\label{eq: statepartition}
\begin{bmatrix}
    \dot{x}_{N} \\ \dot{x}_{S}
\end{bmatrix} = \begin{bmatrix}
 {A}_{NN} & {A}_{NS}  \\
 {A}_{SN} & {A}_{SS}
\end{bmatrix} \begin{bmatrix}
    x_{N} \\ x_{S}
\end{bmatrix} + \begin{bmatrix}
   0 & 0\\ 0 & I_{|S|}
\end{bmatrix}u,
\end{equation}
where $I_{|S|}\in \R^{|S|\times|S|}$ is the identity matrix. Consider the dynamics of $x_N$ where the inputs are given by $x_S$, that is,
$$\dot{x}_N = A_{NN}x_N+A_{NS}x_S.$$We say that $S$ is a leader selection that achieves structural controllability if  $(A_{NN},A_{NS})$ is structurally controllable.

To start with, from Definition \ref{def: structuralcontrollability}, we know that $S$ is a capable actuator set if and only if there exists a pair $(\hat{A},\hat{B})$ with the same structure as $(A,B(S))$ such that the controllability matrix $P\in \R^{n\times n^2}$ given by
$$P = \begin{bmatrix}
   0 & 0 & 0 &\hat{A}_{NS} & 0 & \hat{A}_{NN}\hat{A}_{NS}+\hat{A}_{NS}\hat{A}_{SS} &\cdots\\
   0 & I_{|S|} & 0 & \hat{A}_{SS} & 0 & \hat{A}_{SN}\hat{A}_{NS}+\hat{A}^2_{SS} & \cdots
   \end{bmatrix},$$
   has full rank.
Next, we claim that $P$ has full rank if and only if the following matrix $\tilde{P_1}\in \R^{|N|\times n^2}$ has full rank, 
$$\tilde{P_1} = \begin{bmatrix}
   0 & 0 & 0 &\hat{A}_{NS} & \cdots &0 & \hat{A}^{j-1}_{NN}\hat{A}_{NS} & \cdots 
   \end{bmatrix}.$$
To see this, we notice that $P$ has full rank if and only if the submatrix $P_1\in \R^{|N|\times n^2}$ containing the first $|N|$ rows of $P$ has full rank. One can then easily show that there exists an upper triangular matrix $U\in \R^{n^2\times n^2}$ with unit diagonal entries such that $\tilde{P_1}= {P_1}U$.  Since $U$ is invertible, $\tilde{P_1}$ and $P_1$ have the same rank.

Then, we further claim that $\tilde{P_1}$ has full rank if and only if the following matrix $\bar{P_1}$ has full rank $$\bar{P_1} = \begin{bmatrix}
\hat{A}_{NS} & \hat{A}_{NN}\hat{A}_{NS} & \cdots & \hat{A}^{|N|-1}_{NN}\hat{A}_{NS}\end{bmatrix}.$$
Considering $|S|>0$ and thus $|N|-1\leq n-2$, for any $i>{|N|-1}$, $\hat{A}^{i}_{NN}\hat{A}_{NS}$ is in the span of the matrices $\hat{A}^{j}_{NN}\hat{A}_{NS}$, $j=\{0,1,\ldots,|N|-1\}$ by Cayley-Hamilton theorem. Hence, $\bar{P_1}$ has the same rank as $\tilde{P_1}$. This proves the claim.

In summary, $P$ has full rank if and only if $\bar{P_1}$ has full rank. By the definition of $\bar{P_1}$, $\bar{P_1}$ being full rank is equivalent to controllability of $(\hat{A}_{NN},\hat{A}_{NS})$. Hence, structural controllability of $(A,B(S))$ is equivalent to structural controllability of $({A}_{NN},{A}_{NS})$. We define $\mathcal{L}_K = \{S\text{ }|\text{ $|S| = K$ and}$ $({A}_{NN},{A}_{NS})\ is\ structurally\ controllable\}$ and conclude that $\mathcal{L}_K = \mathcal{C}_K$. The set collection $\mathcal{L}_K$ consists of all the leader selections that achieve structural controllability. From \cite{clark2012leader}, we have that $(V,\mathcal{\tilde{L}}_K)$, where $\mathcal{\tilde{L}}_K:=\{\Omega\text{ }|\text{ } \exists \text{ } S\in \mathcal{L}_K$ $ \text{such that }$ $ \Omega\subset S \}$, is a matroid. Therefore, $(V,\mathcal{\tilde{C}}_K)$ is also a matroid.\hfill\QEDA
\end{proof}

The concept of leader selection is studied in the realm of social networks \cite{patterson2010leader}. From the above proof, 
we establish the equivalence between finding a capable actuator set and finding a leader selection achieving structural controllability.

\section{Performance Guarantee}
\label{sec: performance guarantee}
By considering $f = -F$ as the objective function, Problem (\ref{eq:mainproblem}) falls into the following class of optimization problems:
\begin{equation}
\begin{aligned}
& \max_{S\subset V}
& & f(S), \text{ nondecreasing and $\gamma$-submodular}
\\ 
& \ \mathrm{s.t.} & & S\in \mathcal{F}, \text{ where $\mathcal{M}=(V,\mathcal{F})$ is a matroid},
\end{aligned}
\label{eq:matroidconstrained optimization}
\end{equation}
where the cardinality of the largest set in $\mathcal{F}$ is assumed to be $K$. Our goal is to derive a performance guarantee for the greedy algorithm applied to Problem (\ref{eq:matroidconstrained optimization}).

The greedy algorithm is presented in Algorithm \ref{alg:ALG4} and described as follows. At the $t$th iteration of the algorithm, the actuator set returned by the previous iteration is denoted as $S^{t-1}$. We check the feasibility of the node with the largest marginal gain in $V\setminus S^{t-1}$. If the actuator set obtained by adding this node does not belong to $\tilde{\mathcal{C}}_K$, we exclude the node from consideration. Among the remaining nodes, we repeatedly check the feasibility of the node with the largest marginal gain until a feasible node $v^\text{G}_t$ is found. Then $S^t=\{v^\text{G}_t\}\cup S^{t-1}$ is the actuator set returned by the $t$th iteration. The final actuator set is $S^\text{G}:=S^K$. The feasibility check ensures that $S^t\in \mathcal{F}$. Hence, $S^\text{G}$ belongs to $\mathcal{F}$.

We define $U^{-1}=\emptyset$ and also use notations $U^t\subset V$ for $0\leq t\leq K-1$ such that $U^t\subset V$ expands at the $(t+1)$th iteration and includes all the nodes having been considered by the feasibility check before $v^\text{G}_{t+1}$ when the algorithm terminates. We also define $U^K = V$ and $\rho_t = f(S^{t+1})-f(S^t)$. 

\begin{algorithm}[H]
        \caption{Greedy Algorithm on Matroid Optimization}
        \label{alg:ALG4}
        \begin{algorithmic}
        \Require set function $f$, ground set $V$ and matroid $(V,\mathcal{F})$
        \Ensure actuator set $S^\text{G}$
        \Function {GreedyonMatroid}{$f, V,\mathcal{F}$}
        \State {$S^0 = \emptyset$, $U^0= \emptyset$, $t=1$}
        \While{$U^{t-1}\neq V$}
            \State ${i^*(t)}= \arg\max_{i\in V\setminus U^{t-1}}\rho_i(S^{t-1})$
            \If {$S^{t-1}\cup\{i^*(t)\}\notin\mathcal{F}$}
        \State $U^{t-1}\gets U^{t-1}\cup\{i^*(t)\}$
        \Else
        \State $\rho_{t-1} \gets\rho_{i^*(t)}(S^{t-1})$ and $v^\text{G}_t = i^*(t)$
        \State $S^{t} \gets S^{t-1}\cup\{v^\text{G}_t\}$ and $U^{t}\gets U^{t-1}\cup\{v^\text{G}_t\}$
        \State $t\gets t+1$
    \EndIf    
            \EndWhile
            \State $S^\text{G} \gets S^{t-1}$
        \EndFunction
        \end{algorithmic}
    \end{algorithm}

Our main result is as follows.
\begin{theorem}
If Algorithm \ref{alg:ALG4} is applied to Problem (\ref{eq:matroidconstrained optimization}), then 
\begin{equation}
\begin{aligned}
\frac{f(S^\text{G})-f(\emptyset)}{f(S^*)-f(\emptyset)}\geq\frac{\gamma^3}{\gamma^3+1}.
\end{aligned}
\label{eq: performanceguaranteeMatroidOpt}
\end{equation}

\label{thm:upperlimitFormatroidconstraints}
\end{theorem}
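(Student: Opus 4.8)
The plan is to prove the equivalent inequality $f(S^\text{G})-f(\emptyset)\ge \gamma^3(f(S^*)-f(S^\text{G}))$, which rearranges directly into \eqref{eq: performanceguaranteeMatroidOpt} once one notes $f(S^*)-f(S^\text{G}) = (f(S^*)-f(\emptyset))-(f(S^\text{G})-f(\emptyset))$ and that the denominator $f(S^*)-f(\emptyset)$ is positive by strict monotonicity. First I would record the structural facts that make such a comparison possible: since $f=-F$ is strictly increasing, the optimizer $S^*$ is a maximal independent set of $\mathcal{M}=(V,\tilde{\mathcal{C}}_K)$, and because Algorithm~\ref{alg:ALG4} runs until $U^{t-1}=V$ it also returns a maximal independent set; by the matroid axioms established in Theorem~\ref{thm: matroid}, both $S^*$ and $S^\text{G}$ therefore have cardinality $K$. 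I would then fix the decomposition $f(S^\text{G})-f(\emptyset)=\sum_{t=1}^{K}\rho_{t-1}$ and keep Definition~\ref{def:submodularityratio} ready in the form $\rho_\omega(S\cup U)\le \gamma^{-1}\rho_\omega(S)$ for repeated use.

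The combinatorial heart of the argument is a pairing between the two bases. Using the symmetric basis-exchange property of matroids I would construct a bijection $\pi\colon S^*\to S^\text{G}$ that fixes $S^*\cap S^\text{G}$ and satisfies $(S^\text{G}\setminus\{\pi(o)\})\cup\{o\}\in\mathcal{F}$ for every $o\in S^*$. Writing $\pi(o)=v^\text{G}_t$ and noting $S^{t-1}\subseteq S^\text{G}\setminus\{v^\text{G}_t\}$, downward closure of $\mathcal{F}$ gives $S^{t-1}\cup\{o\}\in\mathcal{F}$; that is, the matched optimal element is feasible at exactly the iteration where its partner was selected. I would then reconcile this with the rejection bookkeeping: if $o$ had been rejected at some earlier iteration $s\le t$, then $S^{s-1}\cup\{o\}\notin\mathcal{F}$, contradicting $S^{s-1}\cup\{o\}\subseteq S^{t-1}\cup\{o\}\in\mathcal{F}$. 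Hence $o\notin U^{t-1}$ is still available at iteration $t$, and the greedy selection rule yields the per-iteration inequality $\rho_o(S^{t-1})\le\rho_{v^\text{G}_t}(S^{t-1})=\rho_{t-1}$.

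With the pairing in hand I would chain the submodularity ratio. Monotonicity gives $f(S^*)-f(S^\text{G})\le f(S^*\cup S^\text{G})-f(S^\text{G})$; telescoping the right-hand side over $S^*\setminus S^\text{G}$ and applying \eqref{eq:definition for submodularity ratio} termwise costs one factor $\gamma^{-1}$ and produces the bound $\gamma^{-1}\sum_{o\in S^*}\rho_o(S^\text{G})$. A second application of the ratio lifts each marginal from the full greedy set back to the matched prefix, $\rho_o(S^\text{G})\le\gamma^{-1}\rho_o(S^{t-1})$, and the per-iteration inequality together with the fact that $\pi$ is a bijection collapses $\sum_{o}\rho_o(S^{t-1})\le\sum_{t=1}^{K}\rho_{t-1}=f(S^\text{G})-f(\emptyset)$. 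Combining these relations gives $f(S^*)-f(S^\text{G})\le \gamma^{-2}(f(S^\text{G})-f(\emptyset))$, and since $\gamma\in(0,1]$ this is at most $\gamma^{-3}(f(S^\text{G})-f(\emptyset))$; rearranging yields \eqref{eq: performanceguaranteeMatroidOpt}.

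The step I expect to be the genuine obstacle is the pairing in the second paragraph, namely making the basis-exchange bijection cooperate with the rejection mechanism of Algorithm~\ref{alg:ALG4}. If the clean symmetric-exchange bijection is available, the availability/feasibility reconciliation above is exact and the per-iteration comparison is lossless, so two factors of $\gamma$ already suffice and the argument in fact proves the stronger guarantee $\gamma^2/(\gamma^2+1)\ge \gamma^3/(\gamma^3+1)$. The extra factor of $\gamma$ in \eqref{eq: performanceguaranteeMatroidOpt} is the slack one would pay if the matched optimal element can only be compared to the greedy gain through an additional invocation of the submodularity ratio, for instance when its availability is controlled only relative to the examined set $U^{t-1}$ rather than to the accepted set $S^{t-1}$. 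Establishing that such a feasibility-respecting matching genuinely exists under the rejected-element bookkeeping, and pinning down whether the third $\gamma$ is truly necessary, is where the careful work lies.
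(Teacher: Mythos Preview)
Your approach is correct and takes a genuinely different route from the paper. The paper partitions $S^*$ according to the examined sets $U^t$ (setting $s_{t-1}=|S^*\cap(U^t\setminus U^{t-1})|$), proves $\sum_{i\le t}s_{i-1}\le t$ from the matroid axioms, and then solves a linear program to bound $\sum_t \rho_{t-1}s_{t-1}$; because the greedy gains $\rho_{t-1}$ need not be monotone in $t$, collapsing the LP optimum into $\sum_t\rho_{t-1}$ costs a third application of the submodularity ratio. You replace the partition-and-LP machinery by a direct Brualdi-type basis-exchange bijection between $S^*$ and $S^{\text{G}}$. The reconciliation with the rejection bookkeeping that worries you is in fact watertight: the bijection you invoke is a classical matroid fact (for any two bases $A,B$ there is a bijection $\sigma\colon A\to B$ fixing $A\cap B$ with $(B\setminus\{\sigma(a)\})\cup\{a\}$ independent for every $a$), and your argument that the feasibility of $S^{t-1}\cup\{o\}$---inherited from $(S^{\text{G}}\setminus\{\pi(o)\})\cup\{o\}$ by downward closure---forbids $o$ from being rejected at any step $s\le t$, hence $o\in V\setminus U^{t-1}$ and $\rho_o(S^{t-1})\le\rho_{t-1}$, is exactly right. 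Consequently your chain stops at $f(S^*)-f(S^{\text{G}})\le\gamma^{-2}\bigl(f(S^{\text{G}})-f(\emptyset)\bigr)$, which already yields the sharper ratio $\gamma^2/(\gamma^2+1)\ge\gamma^3/(\gamma^3+1)$; the paper's third factor of $\gamma$ is an artifact of its LP-based bound rather than intrinsic to the problem.
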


The idea of the proof is based on the work of \cite{fisher1978analysis}, which derives the performance guarantee for matroid optimization problems featuring submodular objectives. 

To assess suboptimality of the greedy selection $S^\text{G}$, we aim to find an upper bound for $f(S^*)-f(S^\text{G})$. We denote $S^*=\{v^*_1,\ldots, v^*_K\}$ and notice
\begin{equation}
\begin{aligned}
f(S^*)-f(S^\text{G}) &\leq f(S^*\cup S^\text{G})-f(S^\text{G})\\
 &= \sum^{K}_{k=1} \rho_{v^*_k}(\{v^*_1,\ldots, v^*_{k-1}\}\cup S^\text{G})\\
 &\leq \gamma^{-1}\sum_{j\in S^*\setminus S^\text{G}}\rho_j(S^\text{G}),
\end{aligned}
\label{eq: ideaTheorem}
\end{equation}
where the first inequality is due to the monotonicity of $f$ and the equality shows a partition of $f(S^*\cup S^\text{G})-f(S^\text{G})$. The last inequality is due to Definition \ref{def:submodularityratio}. To further bound $\sum_{j\in S^*\setminus S^\text{G}}\rho_j(S^\text{G})$, we need the following lemmas.
\begin{lemma}
Let $s_{t-1} = |S^*\cap(U^t\setminus U^{t-1})|$, then
\begin{equation}
\begin{aligned}
\sum_{j\in S^*\setminus S^\text{G}}\rho_j(S^\text{G})\leq \gamma^{-1} \sum_{t=1}^{K}\rho_{t-1}s_{t-1}.
\end{aligned}
\label{eq: lemmainequality1}
\end{equation}
\label{lmm: totalInequality}
\end{lemma}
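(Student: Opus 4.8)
The plan is to split the left-hand side over the blocks $U^t\setminus U^{t-1}$, $t=1,\dots,K$, which partition $V$ because $U^0=\emptyset$, $U^K=V$ and the sets $U^t$ are nested. Writing $\sum_{j\in S^*\setminus S^\text{G}}\rho_j(S^\text{G})=\sum_{t=1}^{K}\sum_{j\in(S^*\setminus S^\text{G})\cap(U^t\setminus U^{t-1})}\rho_j(S^\text{G})$, I would bound each inner summand by $\gamma^{-1}\rho_{t-1}$, then replace the number of summands in block $t$ by its upper bound $s_{t-1}=|S^*\cap(U^t\setminus U^{t-1})|$, using $\rho_{t-1}\ge 0$ (which holds since $f$ is nondecreasing). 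Everything then reduces to the per-element estimate $\rho_j(S^\text{G})\le\gamma^{-1}\rho_{t-1}$ for each $j\in(S^*\setminus S^\text{G})\cap(U^t\setminus U^{t-1})$.

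For this estimate I would use the submodularity ratio exactly once. Since the greedy iterates are nested we have $S^{t-1}\subseteq S^\text{G}$, so writing $S^\text{G}=S^{t-1}\cup(S^\text{G}\setminus S^{t-1})$ and applying Definition~\ref{def:submodularityratio} with base $S^{t-1}$ gives $\gamma\rho_j(S^\text{G})\le\rho_j(S^{t-1})$. It therefore suffices to show $\rho_j(S^{t-1})\le\rho_{t-1}$, i.e.\ that the marginal gain of $j$ at step $t$ did not exceed that of the node actually selected, $v^\text{G}_t$.

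This last inequality is the crux, and the step I expect to be the main obstacle, because it rests entirely on the precise meaning of the sets $U^t$. Since $j\in S^*\setminus S^\text{G}$ we have $j\ne v^\text{G}_t$; and because $U^t\setminus U^{t-1}$ consists of $v^\text{G}_t$ together with exactly the nodes the feasibility check rejects while searching for $v^\text{G}_{t+1}$, the node $j$ must be one of those \emph{later}-rejected nodes. Consequently $j$ was not examined during the search for $v^\text{G}_t$, although it was available (it lay outside $U^{t-1}$ at the start of iteration $t$). As the greedy rule scans the available nodes in order of decreasing marginal gain with respect to $S^{t-1}$ and stops at the first feasible one, $v^\text{G}_t$ with gain $\rho_{t-1}$, any available node it never reached, in particular $j$, must satisfy $\rho_j(S^{t-1})\le\rho_{t-1}$. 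The delicate point is precisely that, even though $j$ is infeasible relative to $S^t$ (it is discarded in the next search), at step $t$ it is only a sub-threshold available node, which is what lets us compare its gain to $\rho_{t-1}$ instead of finding it larger. Chaining $\rho_j(S^\text{G})\le\gamma^{-1}\rho_j(S^{t-1})\le\gamma^{-1}\rho_{t-1}$ with the block decomposition and the count $s_{t-1}$ then finishes the proof; note that if $v^\text{G}_t$ itself belongs to $S^*$ it contributes $0$ to the left-hand side while still being counted in $s_{t-1}$, which only loosens the bound.
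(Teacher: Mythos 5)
Your argument follows the paper's proof almost step for step: the same decomposition of the sum over the blocks $U^t\setminus U^{t-1}$, the same two-step per-element estimate $\gamma\rho_j(S^\text{G})\le\rho_j(S^{t-1})\le\rho_{t-1}$ (submodularity ratio applied with base $S^{t-1}\subseteq S^\text{G}$, then the greedy ordering), and the same counting via $s_{t-1}$. Your handling of the crux---that a node $j\in U^t\setminus U^{t-1}$ with $j\ne v^\text{G}_t$ was available but never examined during the search for $v^\text{G}_t$, hence satisfies $\rho_j(S^{t-1})\le\rho_{t-1}$---is exactly the paper's claim that $\rho_{t-1}=\max_{j\in V\setminus U^{t-1}}\rho_j(S^{t-1})$, and you argue it correctly.

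However, there is one genuine flaw: your justification of the block decomposition rests on the claim $U^0=\emptyset$, which is false in general. In the paper's notation, $U^0$ denotes the \emph{final} value of that set at termination, i.e.\ it contains every node rejected by the feasibility check before the first selection $v^\text{G}_1$; the algorithm initializes $U^0=\emptyset$ but then grows it. (In the paper's $4$-node example, if $F(\{v_1\})$ were the smallest singleton value, $v_1$ would be rejected in the first search and would end up in $U^0$.) Consequently the blocks $U^t\setminus U^{t-1}$, $t=1,\dots,K$, partition only $V\setminus U^0$, not $V$, and your opening identity $\sum_{j\in S^*\setminus S^\text{G}}\rho_j(S^\text{G})=\sum_{t=1}^{K}\sum_{j\in(S^*\setminus S^\text{G})\cap(U^t\setminus U^{t-1})}\rho_j(S^\text{G})$ requires the additional fact that $S^*$ does not meet $U^0$. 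This is precisely where the matroid structure enters, a point your proof never uses: every $j\in U^0$ satisfies $\{j\}\notin\mathcal{F}$ by definition of rejection, while every $j\in S^*$ satisfies $\{j\}\in\mathcal{F}$ because $S^*\in\mathcal{F}$ and $\mathcal{F}$ is closed under taking subsets; hence $S^*\cap U^0=\emptyset$. This is exactly how the paper disposes of the $t=0$ block. With this one-line repair, the rest of your argument is correct and coincides with the paper's proof.
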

\begin{proof}
From Definition \ref{def:submodularityratio}, we have
\begin{equation}
\begin{aligned}
\rho_j(S^\text{G})  & \leq \gamma^{-1}\rho_j(S^{t-1}), \forall t\leq K\text{, } \forall j\in V.
\end{aligned}
\label{eq: submodularityratioTheorem}
\end{equation}
Since $U^{t_1}\subset U^{t_2}$ for any $t_1<t_2$, notice that
\begin{equation}
V = U^K =  \bigcup^{K}_{t=0}(U^t\setminus U^{t-1}).
\label{eq:partition}
\end{equation}
Also considering $U^{t_1}\setminus U^{t_1-1}$ and $U^{t_2}\setminus U^{t_2-1}$ are disjoint, we know (\ref{eq:partition}) constitutes a partition of $V$.
Since there is no subset of $U^0$ belonging to $\mathcal{F}$, we have $S^*\cap U^0 = \emptyset$. With the partition of $V$ as in (\ref{eq:partition}), we can partition $S^*$ as:
$$S^*=\overset{{K}}{\underset{t=1}{{\bigcup}}}(S^*\cap(U^t\setminus U^{t-1})).$$
Combining the above partition with (\ref{eq: submodularityratioTheorem}), we have
\begin{equation}
\begin{aligned}
\sum_{j\in S^*\setminus S^\text{G}}\rho_j(S^\text{G}) & \leq \sum_{j\in S^*}\rho_j(S^\text{G})
\\ 
& = \sum_{t=1}^{K}\sum_{j\in S^*\cap(U^{t}\setminus U^{t-1})}\frac{1}{\gamma}\rho_j(S^{t-1}).
\end{aligned}
\label{eq:decomposition of S^*_m}
\end{equation}
Notice that all the nodes in $U^{t-1}$ have been considered by the feasibility check before $v^\text{G}_t$. Since the greedy algorithm first checks the elements in $V\setminus U^{t-1}$ with larger marginal gains when added to $U^{t-1}$, we have that 
$$\rho_{t-1} = \max_{j\in V\setminus U^{t-1}}\rho_j(S^{t-1}).$$
Considering $V\setminus U^{t-1}= \cup_{i=t}^{K} (U^i\setminus U^{i-1})$,
for any $t'\geq t$,
\begin{equation}
\begin{aligned}
\rho_{t-1}\geq \rho_j(S^{t-1}), \forall j\in U^{t'}\setminus U^{t'-1}.
\end{aligned}
\label{eq: greedypickproperty}
\end{equation}
Thus, for any $j\in S^*\cap(U^t\setminus U^{t-1})$, we have $\rho_j(S^{t-1})\leq \rho_{t-1}$ and 
\begin{equation}
    \sum_{j\in S^*\cap(U^{t}\setminus U^{t-1})}\rho_j(S^{t-1}) \leq \rho_{t-1} s_{t-1}.
    \label{eq: backwardnumberinequality}
\end{equation}
Now combining (\ref{eq:decomposition of S^*_m}) and (\ref{eq: backwardnumberinequality}), it is straightforward that
\begin{equation*}
\sum_{j\in S^*\setminus S^\text{G}}\rho_j(S^\text{G})\leq  \sum_{t=1}^{K}\gamma^{-1}\rho_{t-1}s_{t-1}.   
\end{equation*}\hfill\QEDA
\end{proof}
\begin{lemma}
For any $t\in \{1,\ldots,K\}$, we have 
\begin{equation}
\sum_{i=1}^{t}s_{i-1} \leq t.
\label{eq: lemmainequality2}
\end{equation}
\label{lmm: secondInequality}
\end{lemma}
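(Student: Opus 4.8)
The plan is to convert the left-hand side of \eqref{eq: lemmainequality2} into the single cardinality $|S^*\cap U^t|$, and then to bound this by $t=|S^t|$ using the exchange axiom of the matroid together with the observation that a node discarded by the feasibility check remains infeasible for every larger greedy set. For the first step, recall that $s_{i-1}=|S^*\cap(U^i\setminus U^{i-1})|$ and that the sets $U^0\subset U^1\subset\cdots$ are nested. Consequently $\{U^i\setminus U^{i-1}\}_{i=1}^{t}$ are pairwise disjoint and partition $U^t\setminus U^0$, so $\sum_{i=1}^{t}s_{i-1}=|S^*\cap(U^t\setminus U^0)|$. Since $S^*\cap U^0=\emptyset$ was already established in the proof of Lemma~\ref{lmm: totalInequality}, this reduces the claim to proving $|S^*\cap U^t|\le t$.

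For the main inequality I would compare two independent sets. On one hand $S^t\in\mathcal{F}$ with $|S^t|=t$ and $S^t\subset U^t$; on the other hand $S^*\cap U^t\subset S^*\in\mathcal{F}$ is independent by downward closure (matroid property (ii)). Suppose, for contradiction, that $|S^*\cap U^t|>t=|S^t|$. Applying the exchange axiom (matroid property (iii)) to the pair $S^t$ and $S^*\cap U^t$ produces a node $\omega\in(S^*\cap U^t)\setminus S^t$ with $S^t\cup\{\omega\}\in\mathcal{F}$. The rest of the proof derives a contradiction from the existence of such an $\omega$.

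The contradiction comes from the persistence of rejection, which I expect to be the delicate part. Because $\omega\in U^t\setminus S^t$, the node $\omega$ was examined by the feasibility check during the construction of $U^t$ but was never accepted into the greedy set; reading off Algorithm~\ref{alg:ALG4}, this rejection occurred while the current accepted set equalled some $S^{p-1}$ with $p-1\le t$, and a rejection means precisely $S^{p-1}\cup\{\omega\}\notin\mathcal{F}$. Since the accepted set grows monotonically, $S^{p-1}\subset S^t$, so $S^t\cup\{\omega\}$ contains the dependent set $S^{p-1}\cup\{\omega\}$; by downward closure a superset of a dependent set is dependent, whence $S^t\cup\{\omega\}\notin\mathcal{F}$, contradicting the previous paragraph. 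Therefore $|S^*\cap U^t|\le t$, which is the desired bound. The main obstacle is exactly this bookkeeping: pinning down that every element of $U^t\setminus S^t$ was discarded relative to a greedy set contained in $S^t$, and then invoking the elementary but decisive matroid fact that infeasibility is inherited by supersets, so that a single rejection at $S^{p-1}$ already forbids feasibility at the larger set $S^t$.
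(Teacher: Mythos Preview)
Your proof is correct and follows essentially the same route as the paper: reduce $\sum_{i=1}^t s_{i-1}$ to $|S^*\cap U^t|$ via the telescoping partition and $S^*\cap U^0=\emptyset$, then use the matroid exchange axiom together with the persistence of rejection (downward closure) to cap $|S^*\cap U^t|$ by $|S^t|=t$. The only cosmetic differences are that the paper first states the slightly more general claim ``every independent subset of $U^t$ has size at most $t$'' before specializing to $S^*\cap U^t$, and it runs the contradiction in the contrapositive direction (from $S^t\cup\{j\}\in\mathcal{F}$ down to $S^{t'}\cup\{j\}\in\mathcal{F}$ rather than from $S^{p-1}\cup\{\omega\}\notin\mathcal{F}$ up to $S^t\cup\{\omega\}\notin\mathcal{F}$).
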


This has been proven by \cite{fisher1978analysis} for $\gamma=1$. Since the proof only exploits the matroid structure of $S^*$, the above lemma holds also when $\gamma\neq 1$. We include the proof for the sake of completeness.

\begin{proof}
To start with, we claim that any independent subset of $U^t$ has a cardinality no more than $t$. Otherwise, due to the matroid structure of $\mathcal{F}$, there exists $j\in U^t \setminus S^t$ $\text{such that } S^t\cup \{j\}$ is independent. Since $j\in U^t$ and $U^t = \cup^t_{i=0} (U^i\setminus U^{i-1})$ is a partition, there exists $t'\leq t$ such that $j\in U^{t'}\setminus U^{t'-1}$. Since $S^{t'}\cup\{j\}\subset S^{t}\cup\{j\}$, $S^{t'}\cup\{j\}$ is independent. By the mechanism of the greedy algorithm, we know $j$ passes the feasibility check ahead of $v^\text{G}_{t'+1}$, which contradicts the fact that $j$ is discarded. Then, notice that $S^*\cap U^t$ is an independent subset of $U^t$. Hence, its cardinality is no more than $t$ according to the above claim. The partition $U^t = \cup^t_{i=0} (U^i\setminus U^{i-1})$ gives us that $\sum^t_{i=1}s_{i-1} = |S^*\cap U^t|\leq t.$
\hfill\QEDA\end{proof}

We use (\ref{eq: lemmainequality2}) to obtain an upper bound to the right-hand side of (\ref{eq: lemmainequality1}) and consequently to derive an upper bound of $f(S^*)-f(S^\text{G})$. The following explains in detail these steps to prove the performance guarantee in (\ref{eq: performanceguaranteeMatroidOpt}).

\begin{proof}[Proof of Theorem 2] First, we consider the case in which $\rho_i$, $i = 0,\ldots, K-1$, are distinct. We define $t_1$ such that $\rho_{t_1-1}$ is the largest among $\rho_0, \rho_1,\ldots,\rho_{K-1}$ and $t_2$ such that $\rho_{t_2-1}$ is the largest among $\rho_{t_1}, \rho_{t_1+1},\ldots,\rho_{K-1}$. Following the same pattern we have $t_1,t_2,\ldots,t_p,$ where $t_p = K$. Since $s_{i}\geq 0$ is bounded by (\ref{eq: lemmainequality2}), to give an upper bound to the right-hand side of (\ref{eq: lemmainequality1}), we construct a linear program as follows,
\begin{equation}
\begin{aligned}
& \max_{s_1,s_2,\ldots,s_{K}}
& & \sum_{i=1}^{K} \rho_{i-1} s_{i-1}
\\ 
&\ \ \ \ \ \mathrm{s.t.} & & \sum_{i=1}^{t}s_{i-1} \leq t , \text{ } t = 1,2,\ldots,K,\\
& & & s_{t-1}\geq 0,\text{ }t = 1,2,\ldots,K.
\end{aligned}
\label{eq:matroidconstrained optimization transferred}
\end{equation}

Let $s_{i-1}^*$, $i = 1,2,\ldots,K,$ denote the optimal solution. We claim $s_{t_1-1}^* = t_1$. Otherwise, $s_{t_1-1}^* < t_1$ and due to (\ref{eq: lemmainequality2}) two situations might happen, \textit{a)} $\sum^{t_1}_{i=1} s^*_{i-1} = t_1$ or \textit{b)} $\sum^{t_1}_{i=1} s^*_{i-1} < t_1$.

For case \textit{a)}, we obtain $\sum^{t_1-1}_{i=1} s^*_{i-1} > 0$. It follows that there exists $l < t_1\text{ such that } s^*_{l-1}>0$. Then, we decrease $s^*_{l-1}$ by $\delta>0$ and increase $s^*_{t_1-1}$ also by $\delta$. The value of $\delta$ is small enough so that $s^*_{l-1}>0$. This operation decreases $\sum^{t}_{i=1} s^*_{i-1}$ for $l\leq t\leq t_1-1$ and keeps the sum unchanged for any other $t$, so the constraints of (\ref{eq:matroidconstrained optimization transferred}) are not violated. Also considering that $\rho^*_{t_1-1}>\rho^*_{l-1}$, after these changes, the objective function is strictly greater than the value obtained at the original optimum. Thus, case \textit{a)} is impossible. 

For case \textit{b)}, we collect all the integers $l>t_1$ satisfying $s^*_{l-1}>0$. Assume they are $l_q>\ldots>l_1>t_1$. We have $q\geq 1$. Otherwise, $s^*_{l-1}=0$ for any $l>t_1$ and we can increase $s^*_{t_1-1}$ by a small amount to obtain a greater value of the objective function without violating the constraints. Knowing that $s^*_{l_1-1}>0$ and following the same reasoning provided in the discussion for the case \textit{a)}, we increase $s^*_{t_1-1}$ and decrease $s^*_{l_1-1}$ with the same amount. This way, an objective value is obtained larger than that evaluated at the original optimum. Thus, case \textit{b)} is impossible. 

In conclusion, $s_{t_1-1}^* = t_1$ and (\ref{eq:matroidconstrained optimization transferred}) is equivalent to 
\begin{equation}
\begin{aligned}
& \max_{s_{t_1},\ldots,s_{K-1}}
& & \sum_{i=t_1+1}^{K} \rho_{i-1} s_{i-1}
\\ 
&\ \ \ \ \mathrm{s.t.} & & \hspace{-.2cm}\sum_{i=t_1+1}^{t}s_{i-1} \leq t-t_{1} , \text{ } t = t_1+1,\ldots,K,\\
& & & s_{t-1}\geq 0,\text{ }t = t_1+1,\ldots,K.
\end{aligned}
\label{eq:matroidconstrained optimization transferred2}
\end{equation}
We determine $s^*_{t_2-1}$ in the same way as we determine $s^*_{t_1-1}$ in (\ref{eq:matroidconstrained optimization transferred}). By repeating the above procedure we obtain the solution to (\ref{eq:matroidconstrained optimization transferred}) as 
  \begin{equation}
    s_{i-1}^*=
    \begin{cases}
      t_1, & \text{if } i=t_1,\\
      t_{j}-t_{j-1}, & \text{if } i=t_j\text{ and $j\neq 1$},\\
    0, & \text{otherwise}.
    \end{cases}
    \label{eq:soptimal}
  \end{equation}
  
Now, if $\rho_{i}$, $i=0,\ldots,K-1$ are not distinct, that is, there exist $i_1<i_2<\cdots<i_q$ such that $\rho_{i_1} = \rho_{i_2} = \cdots = \rho_{i_q}$, we can let $s^*_{i_1} = s^*_{i_2} = \cdots =s^*_{i_{q-1}} = 0 $ and obtain the same solution as (\ref{eq:soptimal}).

Next, notice that
\begin{equation}
 \begin{aligned}
    \rho_{i_2} & = f(S^{i_2+1})-f(S^{i_2})\\
    &\leq \gamma^{-1}(f(S^{i_1}\cup\{v^\text{G}_{i_2+1}\})-f(S^{i_1}))\\
    &\leq \gamma^{-1}\rho_{i_1},
    \end{aligned}
    \label{eq:rhoi1rhoi2}
  \end{equation}
where the first inequality comes from the definition of submodularity ratio, while the second is due to (\ref{eq: greedypickproperty}).
Substituting the optimal solution into the objective function and considering (\ref{eq:rhoi1rhoi2}), we have 
 \begin{equation}
 \begin{aligned}
    \sum_{i=1}^{K} \rho_{i-1} s^*_{i-1} &= t_1\rho_{t_1-1}+\cdots+(t_p-t_{p-1})\rho_{t_{p}-1}\\
    &\leq \gamma^{-1}\sum^p_{k=1}\sum_{i=t_{k-1}+1}^{t_k}\rho_{i-1}\\
    &=\gamma^{-1}\sum^{K}_{i = 1}\rho_{i-1}\\
    &= \gamma^{-1}(f(S^\text{G})-F(\emptyset)).
    \end{aligned}
    \label{eq:backwardInequalities}
  \end{equation}
Combining (\ref{eq: ideaTheorem}), (\ref{eq: lemmainequality1}) and (\ref{eq:backwardInequalities}), we have
   \begin{equation*}
 \begin{aligned}
    f(S^*)-f(S^\text{G}) &\leq \gamma^{-1}\sum_{j\in S^*\setminus S^\text{G}}\rho_j(S^\text{G})\\
    &\leq \gamma^{-2} \sum_{i=1}^{K} \rho_{i-1}s_{i-1}^* \\
    &\leq \gamma^{-3} \Big(f(S^\text{G})-f(\emptyset)\Big).
    \end{aligned}
    \label{eq: totalinequalityforMatroidOptimization}
  \end{equation*}
By rewriting the above inequality, we have 
$$\frac{f(S^\text{G})-f(\emptyset)}{f(S^*)-f(\emptyset)}\geq\frac{\gamma^3}{\gamma^3+1}.$$\hfill\QEDA
\end{proof}

Notice that this guarantee coincides with that in \cite{fisher1978analysis} if $\gamma = 1$. We also refer to Appendix for a comparison of this guarantee with the one given by \cite{pmlr-v80-chen18b} using the residual random greedy algorithm. Since in the proof of Theorem~\ref{thm:upperlimitFormatroidconstraints}, only (\ref{eq: ideaTheorem}), (\ref{eq: submodularityratioTheorem}) and (\ref{eq:rhoi1rhoi2}) utilize $\gamma$ to form the inequalities, we denote the maximum $\gamma$ satisfying these three inequalities as $\gamma_G$. If $\gamma$ is replaced by $\gamma_G$, the performance guarantee in (\ref{eq: performanceguaranteeMatroidOpt}) would still hold. We call $\gamma_G$ the greedy submodularity ratio. Clearly, $\gamma_G$ is easier to compute than $\gamma$ and gives a better performance guarantee because $\gamma_G\geq \gamma$. One can calculate $\gamma_G$ only after obtaining $S^\text{G}$ by analyzing the three inequalities. The calculation takes $O(n^K)$ steps. Notice that the value of $\gamma_G$ is influenced by the constraint set. If the constraint changes, the inequalities defining $\gamma_G$ are different, and as a result $\gamma_G$ changes. In contrast, submodularity ratio $\gamma$ only depends on the objective function.

By substituting $f=-F$ and $\gamma_G$, the greedy submodularity ratio of $f$, into the performance guarantee (\ref{eq: performanceguaranteeMatroidOpt}), we obtain
   \begin{equation}
  \frac{F(\emptyset)-F(S^\text{G})}{F(\emptyset)-F(S^*)}\geq\frac{\gamma_G^3}{\gamma_G^3+1}.
    \label{eq: boundwithEMPTYSET}
  \end{equation}

\section{Implementation and Numerical Results}
\label{sec:num}
\subsection{Feasibility Check over the Matroid}
\label{sec:feasibilitycheck}
To implement the greedy algorithm on Problem (\ref{eq:mainproblem}), we need a method to determine whether it is feasible to add $\omega\in V\setminus S^k$ to $S^k$ to form $S^{k+1}\in \mathcal{F}$ for $k< K$. To this end, we show that this feasibility check can be cast as a maximum matching problem in a bipartite graph derived from the original graph $(V,E)$. With this equivalence, we can use standard algorithms for maximum matchings in bipartite graphs to check feasibility.

To start with, we introduce the concept of matching in bipartite graphs. An undirected graph is called bipartite and denoted as $(V^1, V^2, \mathsf{E})$ if its vertices are partitioned into $V^1$ and $V^2$ while any undirected edge in $\mathsf{E}$ connects a vertex in $V^1$ to another in $V^2$. A matching $m$ is a subset of $\mathsf{E}$ if no two edges in $m$ share a vertex in common. Given a subset $L$ of $V^1\cup V^2$, we say $L$ is covered by $m$ if any $v\in L$ is adjacent to an edge in $m$. Matching $m$ is maximum if it has the largest cardinality among all the matchings and is perfect if $V^2$ is covered.

Given the graph $(V,E)$ describing System (\ref{eq: systemmodel}), we build an auxiliary bipartite graph to determine whether a given actuator set $S$ is capable. $V'=\{v'_1,\ldots, v'_n\}$ is built as a copy of $V =\{{v}_1,\ldots, {v}_n\}$ and $S'\subset V'$ denotes the copy of $S$. As for the auxiliary edges, we have $\mathsf{E}$ consisting of undirected edges connecting $v_{i}$ with $v'_{j}$ if $(v_i,v_j)\in E$ and $\mathsf{E}_S\subset \mathsf{E}$ consisting of undirected edges adjacent to $v'_k$ for any $k$ such that $v_k \in S$.
The bipartite graph we need to check to determine whether $S$ is capable is defined by a function mapping $S$ to subgraphs of $(V,V',\mathsf{E})$, specifically $\mathcal{H}_b(S) = (V, V'\setminus S', \mathsf{E}\setminus \mathsf{E}_S)$. We propose our method for feasibility check by showing the following theorem.

\begin{theorem}
\label{thm: feasibility}
Given the cardinality limit $K$ and an actuator set $S$ with $|S|=k< K$, $S\in \mathcal{\tilde{C}}_K$ if and only if $|\bar{m}(S)| \geq n-K$, where $\bar{m}(S)$ is a maximum matching in $\mathcal{H}_b(S)$.
\end{theorem}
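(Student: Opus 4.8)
The plan is to first reduce membership in $\tilde{\mathcal C}_K$ to an extension question, and then to a deficiency count in the bipartite graph $\mathcal H_b$, the whole argument resting on one key lemma. That lemma, which I would prove first, states that for \emph{any} actuator set $D\subset V$ the pair $(A,B(D))$ is structurally controllable if and only if $\mathcal H_b(D)$ admits a matching covering all of $V'\setminus D'$, equivalently $|\bar m(D)|=n-|D|$. To establish it I would use Theorem~\ref{thm: matroid} to replace capability of $D$ by structural controllability of the leader--follower pair $(A_{NN},A_{NS})$ with $N=V\setminus D$, and then apply the matching characterization of structural controllability (no inaccessible node and no dilation) underlying \cite{clark2012leader,liu2011controllability}. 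The construction of $\mathcal H_b(D)$ is tailored so that its right vertices $V'\setminus D'$ are exactly the follower nodes that must be matched through internal edges, an actuated node being removed because it is driven directly; a matching saturating $V'\setminus D'$ is then precisely the Hall/no-dilation condition, and accessibility is subsumed through the stem--cycle (cactus) structure of such a saturating matching.

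Granting the key lemma, I would observe that $\tilde{\mathcal C}_K$ is downward closed and that capability is preserved when actuators are added, so $S\in\tilde{\mathcal C}_K$ if and only if $S$ extends to a capable set $S^\sharp\supseteq S$ with $|S^\sharp|=K$. For the forward direction, such an $S^\sharp$ gives, by the key lemma, a matching $m^\sharp$ of $\mathcal H_b(S^\sharp)$ covering $V'\setminus(S^\sharp)'$ with $|m^\sharp|=n-K$; since $S'\subset(S^\sharp)'$, the graph $\mathcal H_b(S^\sharp)$ is obtained from $\mathcal H_b(S)$ by deleting right vertices and their incident edges, so $m^\sharp$ survives as a matching of $\mathcal H_b(S)$ and $|\bar m(S)|\ge n-K$.

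For the converse, I would take a maximum matching $\bar m(S)$ and let $D_0\subset V'\setminus S'$ be the right vertices it leaves uncovered, so that $|D_0|=(n-k)-|\bar m(S)|\le K-k$ by hypothesis. Actuating the non-actuated nodes whose copies form $D_0$, and padding arbitrarily, produces $S^\sharp\supseteq S$ with $|S^\sharp|=K$. Because $\bar m(S)$ already covers $(V'\setminus S')\setminus D_0$, which contains $V'\setminus(S^\sharp)'$, it restricts to a matching of $\mathcal H_b(S^\sharp)$ that saturates $V'\setminus(S^\sharp)'$; the key lemma then makes $S^\sharp$ capable, hence $S\in\tilde{\mathcal C}_K$.

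The deficiency bookkeeping above is routine; the real work, and the step I expect to be the main obstacle, is the key lemma. The delicate points there are showing that a single saturating matching in $\mathcal H_b$ captures \emph{both} of Lin's conditions (no dilation and accessibility) with the edge orientation that the leader--follower reduction induces, and that the set constructed in the converse is genuinely structurally controllable rather than merely matching-feasible. I would therefore concentrate on proving both directions of the lemma and on verifying that actuating the deficiency nodes restores controllability.
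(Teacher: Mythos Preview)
Your proposal is correct and follows essentially the same route as the paper: both invoke the leader--selection matching characterization from~\cite{clark2012leader} as the key lemma (capability of $D$ $\Leftrightarrow$ $D\neq\emptyset$ and $\mathcal H_b(D)$ has a perfect matching), then argue the two directions exactly as you do---pull back a perfect matching from a capable extension of size $K$ for ``$\Rightarrow$'', and actuate the uncovered right vertices of a maximum matching for ``$\Leftarrow$''. The paper simply imports the key lemma from~\cite{clark2012leader} rather than re-deriving Lin's conditions, so the step you flag as the main obstacle need not be reproved; the one small omission in your lemma statement is the hypothesis $D\neq\emptyset$.
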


\begin{proof}
Recall that $S$ is a capable actuator set if and only if $S$ is a leader selection that achieves structural controllability. According to \cite{clark2012leader}, the latter statement is true if and only if $S\neq \emptyset$ and there exists a perfect matching in $\mathcal{H}_b(S)$. We use this graphical property to check structural controllability in the proof.

``$\Rightarrow$": If $S\in \mathcal{\tilde{C}}_K$, there exist $Q \in \mathcal{C}_K$ such that $S\subset Q$ and thus a perfect matching $m$ in $\mathcal{H}_b(Q)$. By the definition of perfect matching, $|m|= n-K$. Since $\mathcal{H}_b(Q)$ is a subgraph of $\mathcal{H}_b(S)$, $m$ is also a matching in $\mathcal{H}_b(S)$. Thus $|\bar{m}(S)|\geq |m| = n-K$.

``$\Leftarrow$": We pick a maximum matching in $\mathcal{H}_b(S)$ and denote it as $m^*$. Suppose in $\mathcal{H}_b(S)$, $P'=\{v'_{i_1},\ldots,v'_{i_d}\}$ is the largest subset of $V'\setminus S'$ missed by $m^*$. Since in $V'\setminus S'$ there are at least $n-K$ vertices covered by $m^*$, we have $d \leq K-k$. Let $Q = P\cup S$, where $P=\{v_{i_1},\ldots,v_{i_d}\}$. Matching $m^*$ is perfect in $\mathcal{H}_b(Q)$ because no vertex in $V'\setminus (P'\cup S')$ is missed by $m^*$. Hence, $Q$ is a capable actuator set. Also considering $|Q|\leq |J|+|S|\leq K$, we have $Q\in \mathcal{\tilde{C}}_K$. Since $S\subset Q$, we also have $S\in \mathcal{\tilde{C}}_K$. \hfill\QEDA
\end{proof}

To obtain a maximum matching in $\mathcal{H}_b(S)$, it is known that one can use a max-flow algorithm \cite{plummer1986matching}. Here we adopt Edmond-Karp algorithm to calculate the maximum flow \cite{Edmonds:1972:TIA:321694.321699}. This requires $O(pq^2)$ steps, where $p$ and $q$ respectively denote node cardinality and edge cardinality in the flow graph generated based on $\mathcal{H}_b(S)$. For example, in $\mathcal{H}(\emptyset)$, $p = 2n+2$ and $q = 2n+|E|$. To conclude, we examine whether $\{\omega\}\cup S^t$ belongs to $\mathcal{\tilde{C}}_K$ by using Edmond-Karp algorithm to calculate the maximum matching cardinality in $\mathcal{H}_b(\{\omega\}\cup S^k)$. The node $\omega$ passes the feasibility check if the matching consists of at least $n-K$ edges.

\subsection{Example on a 4-Node Network}
Suppose a system has 4 nodes, described by the dynamic equations \eqref{eq: systemmodel} where
$$A = \begin{bmatrix}
   0 & -0.5 & -0.8 & -0.6\\
   1 & 0 & 0 & 0\\
   1 & 0 & 0 & 0\\
   1 & 0 & 0 & 0
\end{bmatrix}.$$
The digraph $(V,E)$ corresponding to this system is provided in Figure \ref{fig:oaa}. To calculate the metric $F(S)$ in \eqref{eq:F(S)}, we let $T=2$ and $\epsilon = 10^{-9}$. When applying the greedy algorithm on this example, one starts from $S=\emptyset$ and builds the auxiliary bipartite graph $\mathcal{H}_b(\emptyset)$. It is easy to see any maximum matching consists of two edges, that is, $\bar{m}(\emptyset) = 2$. According to Theorem \ref{thm: feasibility}, for $\emptyset$ to belong to $\tilde{\mathcal{C}}_K$, we require $K\geq 2$. It follows that Problem (\ref{eq:mainproblem}) is feasible only when $K\geq 2$.
\begin{figure}
    \centering
    \includegraphics[width=0.7\linewidth]{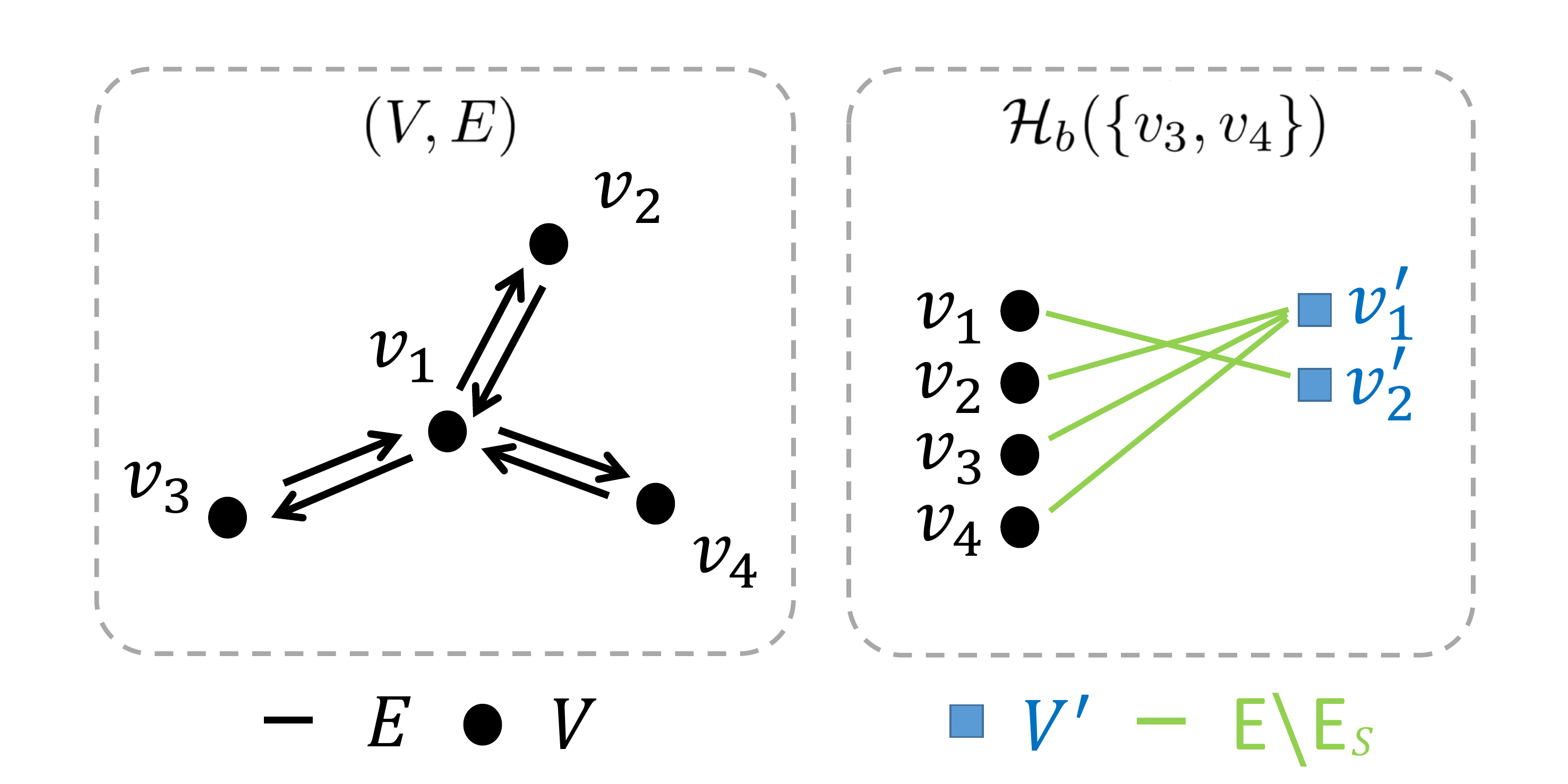}
    \caption{Original graph $(V,E)$ and the auxiliary graph $\mathcal{H}_b(\{v_3,v_4\})$}
    \label{fig:oaa}
    \vspace{.5cm}
\end{figure}

Suppose at most two input nodes are allowed, that is, $K = 2$. The greedy algorithm first examines $v_3$, because $F(\{v_3\})$ is the smallest among $F(\{v_i\})$, $i=1,2,3,4$. Notice that in $\mathcal{H}_b(\{v_3\})$ there exists a matching consisting of two edges. Since $n-K=2$, we know from Theorem \ref{thm: feasibility} that $\{v_3\}$ belongs to $\tilde{\mathcal{C}}_K$. Hence, the first node selected is $v_3$. Notice that even if $F(\{v_1\})$ were the smallest, $v_1$ would not pass the feasibility check, because any maximum matching in $\mathcal{H}_b(\{v_1\})$ only contains one edge. Thus, $\bar{m}(\{v_1\})< n-K$. This implies that $v_1$ does not belong to any capable actuator set with 2 elements. Then, the second node selected is ${v_4}$. Thus, $S^\text{G}=\{v_3,v_4\}$. We illustrate the bipartite graph $\mathcal{H}_b(\{v_3,v_4\})$ in Figure \ref{fig:oaa}. It can be seen that there exists a perfect matching. Consequently, $S^\text{G}$ is indeed a capable actuator set.

\subsection{Experiment on a 23-Node Network}
\label{sec:experiment}
\begin{figure}
    \centering
    \includegraphics[width=0.8\linewidth]{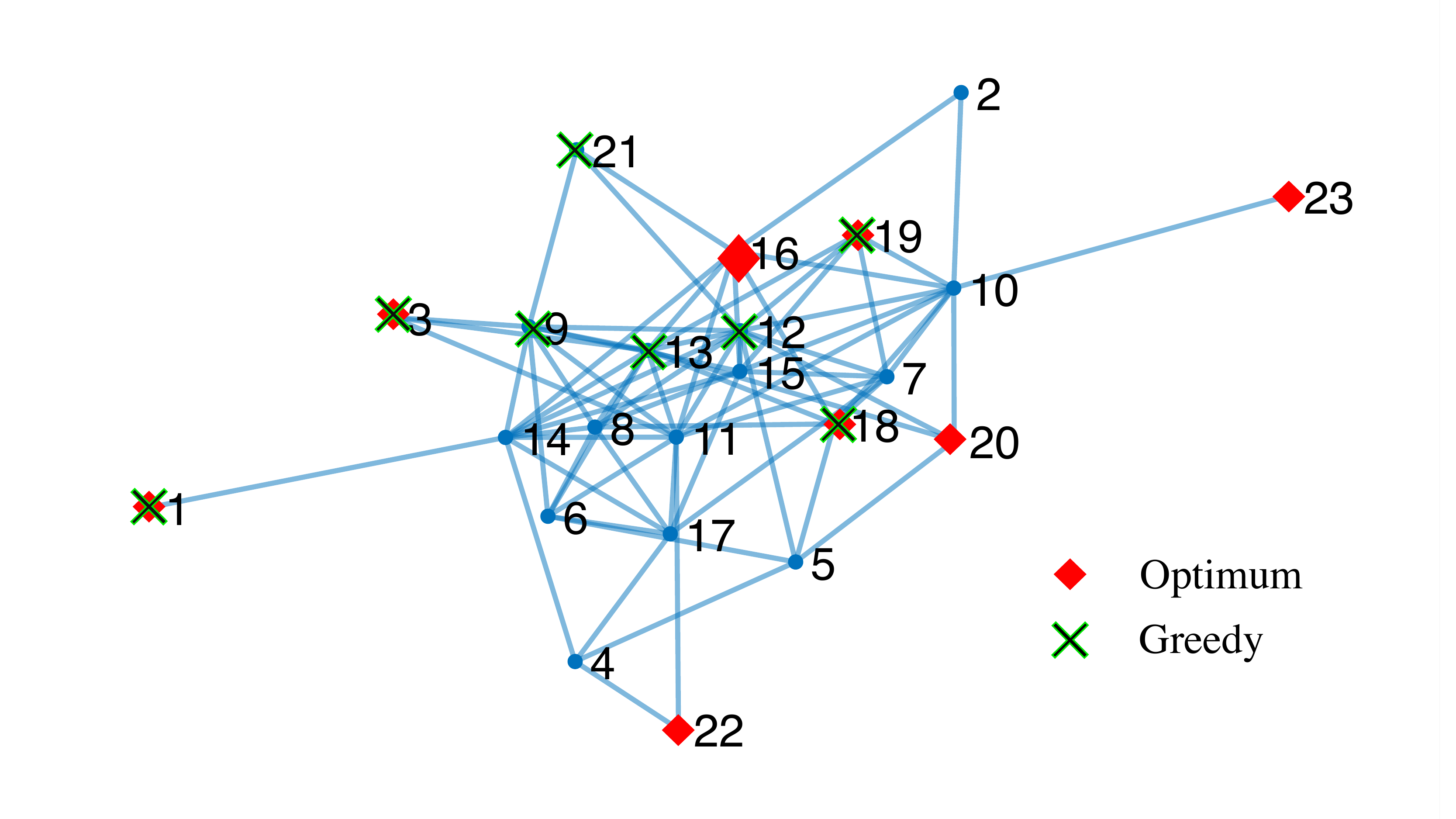}
    \caption{Greedy selection versus the optimal}
    \label{fig:experimentresult}
    \vspace{.5cm}
\end{figure}
We utilize Algorithm \ref{alg:ALG4} on an undirected unweighted graph. To gain insight into how the selections by $S^\text{G}$ and $S^*$ respectively depend on node connectivity, we assign different degrees to different vertices and generate the graph shown in Figure \ref{fig:experimentresult} using the toolbox \cite{NetworkToolbox}. Specifically, vertex $i$ has a degree of $i$ if $i<13$ and a degree of $24-i$ if $i\geq12$. To calculate the objective function $F(S)$ in \eqref{eq:F(S)}, we let $\epsilon = 10^{-9}$ and $T = 2$.  

Illustrated in Figure \ref{fig:experimentresult}, the greedy selection includes some vertices with high degrees that are avoided by the optimal selection. The metric values are $F(S^\text{G})=$ $15976.0$ versus $F(S^*)=$ $6043.8$. The greedy algorithm picks the nodes in the order of $3$, $9$, $13$, $12$, $19$, $21$, $1$, $18$, so it tends to pick some high-degree ones at early stages. In contrast, most late-stage picks feature low degrees and intersect with the optimum $S^*$. This is because the result of the marginal gain analysis varies if different sets are based on. A high-degree node preferred by the greedy algorithm at early stages produces at late stages less marginal gain than the low-degree ones. Thus, the nodes with the largest marginal gains collected from every stage may not perform well as a whole. This shows the disadvantage of the greedy algorithm.

It is numerically verified that $\gamma_G = 1$ satisfies (\ref{eq: ideaTheorem}), (\ref{eq: submodularityratioTheorem}) and (\ref{eq:rhoi1rhoi2}) under the setting of this experiment. Thus, $\gamma_G = 1$ is the greedy submodularity ratio. Let
$$\alpha = \frac{F(\emptyset)-F(S^\text{G})}{F(\emptyset)-F(S^*)}.$$
Substituting $F(S^\text{G})$ and $F(S^*)$, we have $\alpha = 1 - 4.3\times10^{-7}$. By the guarantee in (\ref{eq: boundwithEMPTYSET}), we can ensure without calculation of $S^*$ that $\alpha\geq 0.5$. But due to the fact that $F(\emptyset)$ is large, the bound for $F(S^\text{G})$ resulting from the guarantee in (\ref{eq: boundwithEMPTYSET}) is loose.

\section{Conclusions}

In this paper, we aimed to pick an actuator set to minimize a controllability metric based on average energy consumption while ensuring that the system is structurally controllable. We showed that this problem can be reformulated as a weakly submodular optimization problem over matroid constraints. Given the submodularity ratio of the objective function, we bounded the worst-case performance of the greedy algorithm applied to this class of problems. To implement the greedy algorithm, we proved that the feasibility check over the structural controllability matroid can be done via calculating a maximum matching on a certain auxiliary bipartite graph resulting from the network graph.

Our future work is focused on exploring the tightness of our performance guarantee. Inspired by the numerical observations, we also aim to investigate the network structures under which the energy-based controllability metric would be submodular. 
\bibliographystyle{IEEEtran}
\bibliography{IEEEabrv,library}
\vspace{.5cm}

\section{Appendix}
\subsection{Definitions of submodularity ratio}

Let $\gamma_1$ denote the submodularity ratio of $f$ from Definition~\ref{def:submodularityratio}.
It is straightforward to see that $\gamma = \gamma_1$ satisfies
\begin{equation}
\label{eq:definition for submodularity ratio2}
\gamma\rho_{U}(S) \leq \sum_{\omega\in U\setminus S}\rho_\omega(S), \forall S,U \subset V.
\end{equation}
However, the largest $\gamma$ satisfying the above set of inequalities, denoted as $\gamma_2$, does not necessarily satisfy~\eqref{eq:definition for submodularity ratio} given in Definition~\ref{def:submodularityratio}. Hence, we have $\gamma_2\geq\gamma_1$.

There are previous studies in the literature defining the submodularity ratio as $\gamma_2$ instead of $\gamma_1$~\cite{bian2017guarantees,summers2017performance,pmlr-v80-chen18b}. In the proof of Theorem \ref{thm:upperlimitFormatroidconstraints}, as we are deriving (\ref{eq: submodularityratioTheorem}), we use the inequalities \eqref{eq:definition for submodularity ratio} from Definition~\ref{def:submodularityratio}. One can verify that the inequalities in (\ref{eq:definition for submodularity ratio2}) would not allow us to derive (\ref{eq: submodularityratioTheorem}). Hence, the performance guarantee (\ref{eq: performanceguaranteeMatroidOpt}) does not extend to the submodularity ratio $\gamma_2$. 

 The work in \cite{bian2017guarantees} provides a performance guarantee for the greedy algorithm applied to weakly submodular optimization involving cardinality constraints. This guarantee improves with increasing $\gamma_2$. Since we have $\gamma_2\geq\gamma_1$, the guarantee also holds if $\gamma_2$ is replaced by $\gamma_1$. In addition, the work of \cite{summers2017performance} obtains a lower bound for $\gamma_2$ for the metric $-F$ in \eqref{eq:F(S)} based on eigenvalue inequalities for sum and product of matrices. One can easily verify that this lower bound is also applicable to $\gamma_1$ from Definition~\ref{def:submodularityratio}.

To the best of our knowledge, the guarantee in (\ref{eq: performanceguaranteeMatroidOpt}) is the first performance guarantee for the greedy algorithm applied to matroid optimization problems featuring weakly submodular objective functions. The work of \cite{pmlr-v80-chen18b} exploited the submodularity ratio defined by (\ref{eq:definition for submodularity ratio2}) and obtained a guarantee for the residual random greedy algorithm on the same problem. We denote the final set returned by this algorithm as $S^{\text{RRG}}$. The guarantee provided in \cite{pmlr-v80-chen18b} for this class of randomized algorithms is
   \begin{equation}
    \frac{f(S^{\text{RRG}})-f(\emptyset)}{f(S^*)-f(\emptyset)}\geq \frac{\gamma_2^2}{(1+\gamma_2)^2}.
    \label{eq: RRGguarantee}
  \end{equation}
Let $\gamma$ denote the theoretical lower bound derived in \cite{summers2017performance} for the metric $-F$ in \eqref{eq:F(S)}. This lower bound satisfies $\gamma_2\geq\gamma_1\geq\gamma$. Since $\gamma$ is applicable to both \eqref{eq: performanceguaranteeMatroidOpt} and \eqref{eq: RRGguarantee}, we let $a_1(\gamma) = \gamma^3/(1+\gamma^3)$ and $a_2(\gamma) = \gamma^2/(1+\gamma)^2$ denote the theoretical guarantees associated with (\ref{eq: performanceguaranteeMatroidOpt}) and \eqref{eq: RRGguarantee}, respectively. These two functions are plotted in Figure \ref{fig:comparisonbetweentwoguarantees}. We see that the guarantee we derived in (\ref{eq: performanceguaranteeMatroidOpt}) is tighter than the one from \cite{pmlr-v80-chen18b}, if the lower bound $\gamma>0.5$.
\begin{figure}
    \centering
    \includegraphics[width=0.7\linewidth]{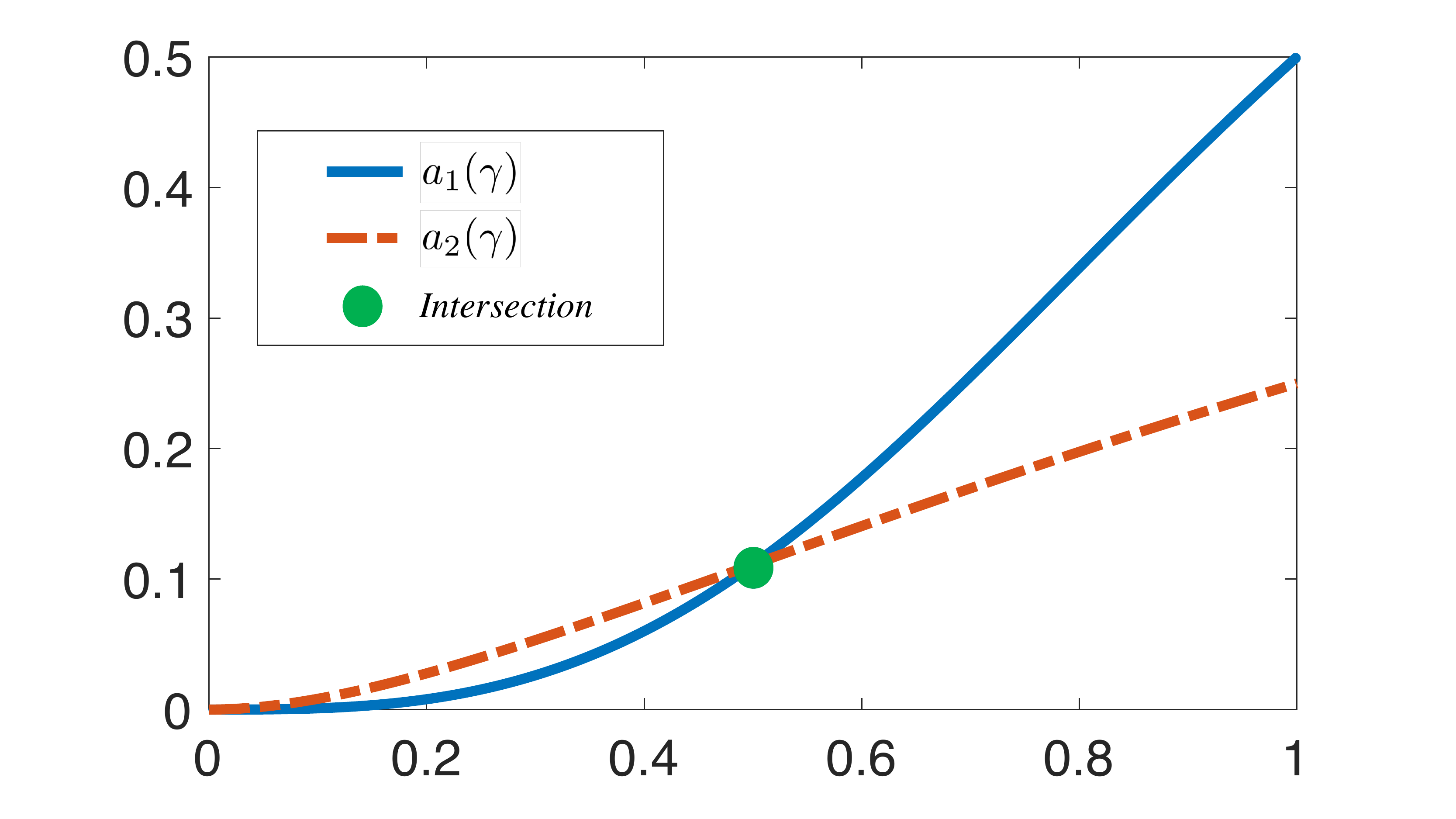}
    \caption{A comparison between two guarantees}
    \label{fig:comparisonbetweentwoguarantees}
    \vspace{.5cm}
\end{figure}

\end{document}